\documentclass{amsart}
\usepackage{amsmath,amssymb,mathtools}
\usepackage{hyperref}
\usepackage{xcolor}

\theoremstyle{plain}
\newtheorem{theorem}{Theorem}[section]
\newtheorem{proposition}[theorem]{Proposition}
\newtheorem{lemma}[theorem]{Lemma}
\newtheorem{corollary}[theorem]{Corollary}
\numberwithin{equation}{section}
\theoremstyle{remark}
\newtheorem{remark}[theorem]{Remark}

\theoremstyle{definition}

\newcommand{\Z}{\mathbb Z}
\newcommand{\F}{\mathbb F}
\newcommand{\Q}{\mathbb Q}
\newcommand{\cZ}{\mathcal Z}
\newcommand{\RZ}{\mathbb R \mathcal Z}
\newcommand{\spn}[2]{\operatorname{span}_{\Z_#1}\{#2\}}
\newcommand{\cO}{\mathcal O}
\DeclareMathOperator{\conv}{conv}

\newcommand{\R}{\mathbb R}

\newcommand{\wideand}{\quad \text{and} \quad}

\author[S. Choi]{Suyoung Choi}
\address{Department of Mathematics, Ajou University, 206, World cup-ro, Yeongtong-gu, Suwon 16499, Republic of Korea}
\email{schoi@ajou.ac.kr}

\author[H. Jang]{Hyeontae Jang}
\address{School of Mathematics, Korea Institute for Advanced Study, 85 Hoegiro, Dongdaemun-gu, Seoul 02455, Republic of Korea}
\email{hjang1112@kias.re.kr}

\title[Separation of Buchstaber invariants]{A finite-geometric obstruction and a polytopal separation of Buchstaber invariants}
\date{\today}

\thanks{The first-named author was supported by the National Research Foundation of Korea Grant funded by the Korean Government (RS-2025-00521982). The second-named author was supported by a KIAS Individual Grant (MG105401) at Korea Institute for Advanced Study}

\keywords{Buchstaber number, real Buchstaber number, polytopal sphere, toric lifting problem, finite geometry, stellar subdivision}

\subjclass[2020]{Primary 57S12; Secondary 52B05, 05B35, 05B25}

\begin{document}
\maketitle

\begin{abstract}
    We construct a polytopal simplicial sphere that admits a mod $2$ characteristic map but no mod $3$ characteristic map, and hence no integral one.
    This shows that the Buchstaber number of a polytopal sphere can be strictly smaller than its real Buchstaber number, and gives a negative answer to the toric lifting problem.
\end{abstract}

\section{Introduction}
Let $K$ be an $(n-1)$-dimensional finite simplicial complex on the vertex set~$[m]=\{1,\ldots,m\}$. 
Its moment-angle complex and real moment-angle complex are
$$
    \cZ_K=\bigcup_{\sigma\in K}(D^2)^\sigma\times(S^1)^{[m]\setminus\sigma} \wideand \RZ_K=\bigcup_{\sigma\in K}(D^1)^\sigma\times(S^0)^{[m]\setminus\sigma}.
$$
They carry canonical actions of $T^m=(S^1)^m$ and $\Z_2^m$, respectively, given by coordinatewise rotations and sign changes. 
The \emph{Buchstaber number} $s(K)$ is the largest dimension of a subtorus of $T^m$ acting freely on $\cZ_K$.
The \emph{real Buchstaber number} $s_{\R}(K)$ is the largest rank of a subgroup of $\Z_2^m$ acting freely on~$\RZ_K$. 
These invariants measure the size of free subactions of the canonical actions; see~\cite{Fukukawa-Masuda2011, Erokhovets2014, Buchstaber-Panov2015book}.

The subgroup of elements of order dividing two in a freely acting $r$-dimensional subtorus induces a free $\Z_2^r$-action on $\RZ_K$. 
Together with the isotropy groups corresponding to maximal simplices, this gives
$$
    s(K)\le s_{\R}(K)\le m-n,
$$
as observed in~\cite{Fukukawa-Masuda2011}.

Assume now that $K$ is pure.
An \emph{integral characteristic map} assigns a vector~$\lambda(i)\in\Z^n$ to each vertex $i$ so that the vectors assigned to every facet form a basis of $\Z^n$. 
For a prime $p$, a \emph{mod~$p$ characteristic map} is defined in the same way, with $\Z^n$ replaced by $\Z_p^n$, where $\Z_p=\Z/p\Z$.
The extreme values of the two invariants are described by these maps:
\begin{align*}
    s(K)=m-n &\quad\Longleftrightarrow\quad K\text{ admits an integral characteristic map},\\
    s_{\R}(K)=m-n &\quad\Longleftrightarrow\quad K\text{ admits a mod }2\text{ characteristic map}.  
\end{align*}
These notions come from toric topology~\cite{Davis-Januszkiewicz1991, Ishida-Fukukawa-Masuda2013, Buchstaber-Panov2015book}.
If $K$ is the boundary of the polar of a simple $n$-polytope $P$, then integral characteristic maps on $K$ are the characteristic maps of quasitoric manifolds over $P$, and mod~$2$ characteristic maps are those of small covers over $P$~\cite{Davis-Januszkiewicz1991}. 
Reduction modulo~$2$ corresponds to passing from a quasitoric manifold to the fixed point set of its conjugation, which is a small cover over the same polytope. 
Thus every simple polytope that supports a quasitoric manifold also supports a small cover. 
A positive answer to either of the following two questions would give the converse.

The first is the \emph{toric lifting problem}, posed by Zhi L\"u in 2011 and documented in~\cite{Choi-Park2016Wedge}: is every mod~$2$ characteristic map on a PL sphere the mod~$2$ reduction of an integral characteristic map? 
For polytopal spheres, this asks whether every small cover is the fixed point set of the conjugation on a quasitoric manifold. 
Positive results have been obtained in several cases~\cite{Choi-Jang-Vallee2025JLMS, Choi-Guan-Jang-Lu2026}. 

The second question concerns the invariants themselves. 
It was shown that the two Buchstaber numbers differ for the real universal complex whose vertices are the nonzero vectors of $\Z_2^4$ and whose simplices are the linearly independent sets, see~\cite{Ayzenberg2014preprint,Sun2017}. 
On the other hand, $s(K)=s_\R(K)$ whenever $n\le3$~\cite[Proposition~3.2.35]{Erokhovets2014}. 
Ayzenberg~\cite[Problem~1]{Ayzenberg2014preprint} and Erokhovets~\cite[Problem~3.2.39]{Erokhovets2014} asked whether $s(K)=s_\R(K)$ for every polytopal sphere $K$. 
No PL~sphere with $s(K) < s_\R(K)$ was previously known.

We answer both questions negatively.

\begin{theorem}\label{thm:main}
    There exists a polytopal sphere $K$ that admits a mod~$2$ characteristic map but no mod~$3$ characteristic map. 
    Consequently, 
    $$
        s(K) < s_{\R}(K).
    $$
\end{theorem}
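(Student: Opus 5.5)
The construction will start from a finite-geometric object that already separates $\Z_2$ from $\Z_3$, and then realize that obstruction on a polytopal sphere via stellar subdivisions; the keywords point to this strategy.

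The consequence $s(K)<s_\R(K)$ is the easy part, so I dispose of it first. A mod~$2$ characteristic map gives $s_\R(K)=m-n$ by the displayed equivalence. If $s(K)=m-n$, then $K$ admits an integral characteristic map $\lambda$. Reducing $\lambda$ modulo~$3$ is again characteristic: a basis of $\Z^n$ is a matrix of determinant $\pm1$, and $\pm1$ is nonzero in $\Z_3$. This contradicts the absence of mod~$3$ characteristic maps, so $s(K)\le m-n-1<s_\R(K)$.

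For the main claim, the first step is to isolate a configuration of vectors that exists over $\F_2$ but not over $\F_3$. The natural choice is the Fano configuration. Take the seven nonzero vectors of $\Z_2^3$ and the incidence conditions saying which triples are dependent, namely the lines, and which are bases. Over a field of characteristic $\neq 2$, seven vectors in $\F^3$ with exactly the Fano dependencies cannot exist. This is the classical fact that the Fano matroid is representable only in characteristic~$2$; it follows from a short determinant computation on the coordinates.

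The second step is to build a polytopal sphere $K$ that forces this configuration. I would start from a simple polytope, or a simplicial polytope such as a cross-polytope or the boundary of a simplex, and perform stellar subdivisions. Each subdivision of a face $\sigma$ adds a vertex $v$. In any characteristic map over a field, $\lambda(v)$ must then be a combination of $\lambda(\sigma)$ with all coefficients nonzero, up to the facet conditions. Over $\Z_2$ this pins $\lambda(v)$ down to the sum of the vectors of $\lambda(\sigma)$. The plan is to choose the subdivisions so that the link of some region projects, after quotienting by the span of a common face, to rank~$3$. The images of certain vertices are then forced to realize Fano points, with the bases of the Fano matroid forced by facets. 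One also needs the lines to be forced dependent. I would arrange this so that, over any field, a dependent triple appears as a subdivision relation $\lambda(v)=\sum_{i\in\sigma}\lambda(i)$ with $|\sigma|=2$ in the quotient.

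Polytopality is automatic, since stellar subdivisions of polytopal spheres are polytopal. Existence of the mod~$2$ map comes from explicitly propagating the assignment $\lambda(v)=\sum_{i\in\sigma}\lambda(i)$ through the subdivisions. Each new facet remains a basis because replacing one element of a basis by the sum including it preserves independence.

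The main obstacle is the forcing step over $\Z_3$. Over $\Z_3$ a subdivided vertex may take the value $\sum c_i\lambda(i)$ with arbitrary nonzero signs $c_i=\pm1$, so the dependency pattern is not automatically Fano. I would handle this by rescaling. In the normalized quotient, choose signs of vectors so that each forced relation becomes $\pm$ a sum, and then show that the sign ambiguities reduce to the non-Fano-versus-Fano dichotomy. In particular, the seventh relation would force $2x=0$ for some nonzero $x$, a contradiction in $\Z_3$. If the freedom in the signs is too large, I would add further subdivisions to rigidify them, using a basis-exchange argument on adjacent facets.
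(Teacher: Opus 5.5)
Your deduction of $s(K)<s_{\R}(K)$ from the absence of a mod~$3$ map is correct. The construction, however, cannot work as proposed. The propagation you use for the mod~$2$ map works verbatim over $\Z$. If $\lambda$ is an integral characteristic map, $\sigma$ is a face, and $\lambda(v)=\sum_{i\in\sigma}\lambda(i)$, then for every facet $G\supseteq\sigma$ and every $i\in\sigma$ we have $\det\lambda_{(G\setminus\{i\})\cup\{v\}}=\pm\det\lambda_G=\pm1$. So every iterated stellar subdivision of the boundary of a simplex or cross-polytope, or of any sphere carrying an integral characteristic map, again carries an integral characteristic map. Its reduction is a mod~$3$ one, and no choice of subdivisions can produce the required $K$.

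Your forcing step is also false. Once $\sigma$ is subdivided it is no longer a face. The facet conditions of the new complex neither put $\lambda(v)$ in the span of $\lambda(\sigma)$ nor make the old facets $G$ independent. Characteristic-map conditions are purely non-vanishing conditions on determinants. Over an infinite or large field, vectors in general position satisfy all of them, so no dependency such as a Fano line can be forced ``over any field.'' Any forcing must exploit the smallness of $\Z_3$. Nor can the obstruction sit in one link: the link of a codimension-$3$ face is a $2$-sphere, which always has an integral characteristic map by the four colour theorem.

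The paper supplies three ingredients that your proposal lacks.

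\emph{A forcing lemma over $\Z_3$.} Suppose $u_a,v_a\in\Z_3^4$, for $a\in\Z_2^4\setminus\{0\}$, satisfy $\det(u_a,u_b,v_c,v_d)\ne0$ for every basis $(a,b,c,d)$. A counting argument that uses $|\Z_3|=3$ then forces $u_{a+b}\in\spn{3}{u_a,u_b}$. Only after the dependencies are forced does the non-representability of the Fano matroid in characteristic~$3$ give a contradiction.

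\emph{Planting these conditions as facets.} The paper starts from a cyclic polytope $P$ with $m_0=n+4$ vertices, which has no characteristic map at all. To its vertices it assigns the columns of $\Lambda$, whose rows form a basis of the kernel of a binary $4\times m_0$ matrix $B$ built from repeated blocks $(b_1,b_1,\dots,b_{15},b_{15})$. Gale evenness together with complementary minors make the facets $[m_0]\setminus\sigma$, for $\sigma\in\cO$, good.

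\emph{A relative subdivision lemma.} It repairs only the bad facets, subdividing at a minimal mod-$2$-dependent set $\tau$ with new vector $2^{-r}\sum_{i\in\tau}V_i$, not the plain sum, and it leaves the good facets untouched. Any mod~$3$ map on the result is then restricted to the original vertices and Gale-dualized, which gives $A\in M_{4,m_0}(\Z_3)$ with $\det A_\sigma\ne0$ for every $\sigma\in\cO$. Pigeonhole over the $41$ projective colours yields four identical blocks, and from them the forbidden families $u,v$.
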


In particular, none of the mod~$2$ characteristic maps on $K$ lifts to an integral one, which gives a negative answer to the toric lifting problem. 
Passing to the polar polytope, we obtain the following.

\begin{corollary}\label{cor:polytope}
    There is a simple polytope that supports a small cover but no quasitoric manifold.
    In particular, there is a small cover that is not weakly equivariantly homeomorphic to the fixed point set of the conjugation on any quasitoric manifold.
\end{corollary}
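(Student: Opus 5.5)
The plan is to build $K$ by iterated stellar subdivisions of a simple starting sphere, for instance the boundary of a simplex or of a cross-polytope. The subdivisions are chosen so that any mod~$p$ characteristic map is forced to encode a point--line configuration that is representable over $\Z_2$ but not over $\Z_3$. The natural candidate is the Fano plane $PG(2,2)$. It is representable only in characteristic $2$: for a complete quadrilateral over $\Z_p$, the three diagonal points are collinear exactly when $p=2$. Polytopality is free, because stellar subdivision of a polytopal sphere is again polytopal (pull a point beyond the face). The inequality $s(K)<s_\R(K)$ is then formal. A mod~$2$ map gives $s_\R(K)=m-n$. An integral characteristic map reduces mod~$3$ to a mod~$3$ characteristic map, since a $\Z$-basis reduces to a $\Z_3^n$-basis. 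So the absence of a mod~$3$ map gives $s(K)\ne m-n$, hence $s(K)<m-n=s_\R(K)$.

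The key local tool is a lemma on what stellar subdivision forces. Suppose $K'$ is obtained from $K$ by subdividing a face $\sigma$ with a new vertex $v$, and $\lambda$ is a mod~$p$ characteristic map on $K'$. The facets of $K'$ containing $v$ are $v*(\sigma\setminus i)*\tau$ for $i\in\sigma$ and $\tau$ a facet of $\mathrm{lk}\,\sigma$. After quotienting by the span of $\lambda(\tau)$, these conditions say that the image of $\lambda(v)$ has nonzero coefficient on every $\lambda(i)$, $i\in\sigma$.

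When $\sigma=\{i,j\}$ is an edge and $p=3$, this means that modulo the link, $\lambda(v)\equiv\pm\lambda(i)\pm\lambda(j)$. When $p=2$ it is pinned down exactly as $\lambda(i)+\lambda(j)$. I would first establish this lemma, together with a converse: every mod~$2$ characteristic map on $K$ extends to $K'$ by setting $\lambda(v)=\sum_{i\in\sigma}\lambda(i)$. That converse handles the existence half of the theorem.

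Next I would choose a small sequence of edge subdivisions in a low-dimensional sphere, say $n=3$ or $4$, starting from the boundary of a cross-polytope or simplex. The goal is that, projectivized and modulo suitable faces, the vertex labels of a mod~$p$ map must realize the seven points of a Fano plane. Four vertices would give a frame in general position, new vertices would give the six "edge sums", and extra subdivisions or the facet structure would force the three diagonal points $\lambda_1+\lambda_2,\ \lambda_1+\lambda_3,\ \lambda_2+\lambda_3$ (or their analogues) to be dependent.

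Over $\Z_2$ the sum of these three is $0$, so the mod~$2$ labelling is consistent. Over $\Z_3$, the signs $\pm$ allowed by the lemma lead after a finite case check to a $3\times3$ determinant of the form $\pm2$ or $\pm4$, which is nonzero mod~$3$ and gives the contradiction. Symmetries of the construction should reduce the sign cases to a few.

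The main obstacle is that the lemma only controls $\lambda(v)$ modulo the span of a link facet. Different facets of $\mathrm{lk}\,\sigma$ give different quotients, so turning these local congruences into a genuine global projective configuration is delicate. I would first try to subdivide edges whose links share a common facet structure, or to work inside a fixed facet. Failing that, I would pass to a suitable face link (a lower-dimensional sphere inherits characteristic maps by projection) where the relevant congruences become equalities. If the naive sequence does not force enough collinearities, I would add further subdivisions of triangles $\{i,j,k\}$, which impose all-nonzero coefficient conditions, until the mod~$3$ case analysis closes.
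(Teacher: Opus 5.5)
Your construction cannot work, and the reason is already contained in your own ``converse'' lemma. The extension $\lambda(v)=\sum_{i\in\sigma}\lambda(i)$ is not special to $\Z_2$. Take a new facet $(G\setminus\{i\})\cup\{v\}$, where $G\supseteq\sigma$ is a facet and $i\in\sigma$. There the column $\lambda(v)$ differs from $\lambda(i)$ by the columns $\lambda(j)$, $j\in\sigma\setminus\{i\}$, which are still present, so the determinant is unchanged over any commutative ring. Hence a stellar subdivision of a sphere that admits a mod~$3$ (or integral) characteristic map again admits one. Subdivision can only remove obstructions, never create them.

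The boundary of the simplex (labels $e_1,\dots,e_n,-e_1-\dots-e_n$) and of the cross-polytope (labels $\pm e_i$) both carry integral characteristic maps. So every iterated stellar subdivision of them admits a mod~$3$ characteristic map, and in your sign analysis the all-plus branch always survives. Relatedly, characteristic-map conditions all have the form $\det\ne0$. No choice of facets can directly force three labels to be dependent, such as forcing the diagonal points to be collinear. Such dependences can only be extracted indirectly, by a counting argument that exploits the small size of the field. Finally, $n=3$ is excluded outright: by the four colour theorem every simplicial $2$-sphere carries the integral characteristic map with values $e_1,e_2,e_3,e_1+e_2+e_3$. Compare the fact, quoted in the introduction, that $s(K)=s_\R(K)$ whenever $n\le3$.

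The paper reverses the roles of the two ingredients. The obstruction sits in the facet structure of the starting polytope: a cyclic polytope with $m_0=30N$ vertices, where Gale's evenness criterion makes $[m_0]\setminus\sigma$ a facet for every $\sigma\in\cO$. Passing to the kernel, a mod~$3$ map yields $A\in M_{4,m_0}(\Z_3)$ with $\det A_\sigma\ne0$ on $\cO$. The Fano configuration thus lives in this $4\times m_0$ Gale dual, not in the labels themselves, which is why $n=m_0-4$ is enormous rather than $3$ or $4$. Pigeonhole over the $41$ colours (the $40$ projective points of $\Z_3^4$ plus zero) then extracts families $u_a,v_a$ with $\det(u_a,u_b,v_c,v_d)\ne0$ on every binary basis. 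The counting argument over $\Z_3$ turns these inequations into $u_{a+b}\in\spn{3}{u_a,u_b}$, which produces a Fano matroid over $\Z_3$, a contradiction. Stellar subdivisions are used only afterwards. They are applied only at faces whose mod~$2$ labels (columns of a kernel basis of $B$) are dependent, so they create a mod~$2$ characteristic map while leaving every obstructing facet untouched. The corollary then follows by polarity and Davis--Januszkiewicz. For its second sentence you also need that a weakly equivariant homeomorphism of small covers induces a face-preserving homeomorphism of the orbit polytopes, a step your proposal does not mention.
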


\section{Finite-geometric obstructions} \label{sec:alg_lem}
Put $E=\Z_2^4\setminus\{0\}$.
To each $a\in E$, assign two vectors $u_a, v_a\in\Z_3^4$.
Sums of labels, such as $a+b$, are taken in $\Z_2^4$.

\begin{lemma}\label{lem:mixed}
    There are no two families $\{u_a\}_{a\in E}$ and $\{v_a\}_{a\in E}$ such that
    \begin{equation}\label{eq:mixed}
        \det(u_a,u_b,v_c,v_d)\ne0 \quad\text{whenever }(a,b,c,d)\text{ is a basis of }\Z_2^4.
    \end{equation}
\end{lemma}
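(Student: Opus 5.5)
The plan is to recast \eqref{eq:mixed} as an incidence statement about lines of the projective space $PG(3,2)$, normalize coordinates using the $GL_4(\Z_3)$ action, and force a contradiction from the fact that $\Z_3^\times=\{\pm1\}$.

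\textbf{Reformulation.} A basis $(a,b,c,d)$ of $\Z_2^4$ amounts to choosing two skew lines of $PG(3,2)$: the line $\ell=\{a,b,a+b\}$ and the line $\ell'=\{c,d,c+d\}$. We also choose two points on each line. Condition \eqref{eq:mixed} then says the following. For any two points $a,b$ of a line $\ell$, the vectors $u_a,u_b$ span a $2$-plane $P_{ab}\subset\Z_3^4$. For any two points $c,d$ of a skew line $\ell'$, the vectors $v_c,v_d$ span a $2$-plane $Q_{cd}$. Every such $P_{ab}$ must be complementary to every such $Q_{cd}$. Equivalently, in $\Lambda^2\Z_3^4\cong\Z_3^6$ with the wedge pairing into $\Lambda^4\Z_3^4\cong\Z_3$, we need
\[
\langle u_a\wedge u_b,\ v_c\wedge v_d\rangle\in\{\pm1\}
\]
whenever the lines through $\{a,b\}$ and $\{c,d\}$ are skew.

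\textbf{Normalization.} First I would use the $GL_4(\Z_3)$ action. Simultaneously I would use the symmetry $GL_4(\Z_2)$ of the index set, together with rescaling each $u_a$ or $v_a$ by $\pm1$, which preserves \eqref{eq:mixed}. Fix the standard basis $e_1,\dots,e_4$ of $\Z_2^4$. We may then assume
\[
u_{e_1}=f_1,\quad u_{e_2}=f_2,\quad v_{e_3}=f_3,\quad v_{e_4}=f_4,
\]
where $f_i$ is the standard basis of $\Z_3^4$. Next I would exploit local configurations. Take a line $\ell$ and a skew line $\ell'$. Projecting modulo $Q_{cd}$ sends $u_a,u_b,u_{a+b}$ to three pairwise independent vectors in $\Z_3^2$. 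There are only four projective points there, so each such triple is nearly rigid. This should pin down the $2\times2$ blocks of the coordinates of the $u$'s and $v$'s up to sign, first for $\ell=\langle e_1,e_2\rangle$ and $\ell'=\langle e_3,e_4\rangle$, and then for the other skew pairs through these points.

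\textbf{Contradiction.} The final step is to derive a contradiction from a product or sum of determinants, each of which must equal $\pm1$. The tool I would try first is the three-term Grassmann--Plücker relation for bivectors. For fixed $c,d$ and three points on a line, it relates $\langle u_a\wedge u_b, V\rangle$, $\langle u_a\wedge u_{a+b}, V\rangle$ and $\langle u_b\wedge u_{a+b}, V\rangle$ whenever $u_{a+b}$ is expressed in $u_a,u_b$ modulo $V^\perp$. Each term is $\pm1$, and over $\Z_3$ a sum of three values $\pm1$ vanishes only when all three are equal. I would collect such relations over a suitable orbit of pairs of skew lines, for instance all lines skew to a fixed line, or a spread. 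The aim is an odd-versus-even sign count that no $\pm1$ assignment satisfies.

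\textbf{Main obstacle.} I expect the hard part to be finding the right global configuration that makes the sign bookkeeping inconsistent. The local conditions are satisfiable individually, and the obstruction must use the whole geometry of $PG(3,2)$. This is reflected in the fact that the unmixed version is the known inequality $s<s_{\R}$ for the real universal complex. If a clean parity argument does not emerge, the fallback is the normalization above followed by a finite case analysis, reduced by the stabilizer of the chosen flag in $GL_4(\Z_2)$ together with the sign symmetries, which remains a finite verification.
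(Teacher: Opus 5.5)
Your proposal is a plan rather than a proof, and the step that has to produce the contradiction is missing. The normalization is valid but uses only one basis. Everything after it is conditional (``I would try first'', ``the aim is an odd-versus-even sign count''). The fallback ``finite case analysis'' is neither carried out nor reduced to anything checkable.

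The specific tool you name also gives no information. Let $\pi$ be projection modulo $Q_{cd}$ and write $\pi u_{a+b}=\alpha\,\pi u_a+\beta\,\pi u_b$. Then $\langle u_a\wedge u_{a+b},V\rangle=\beta\langle u_a\wedge u_b,V\rangle$ and $\langle u_b\wedge u_{a+b},V\rangle=-\alpha\langle u_a\wedge u_b,V\rangle$. Nonvanishing only says $\alpha,\beta\in\{\pm1\}$, so the three brackets are automatically $\pm1$ and no scalar identity among them can fail. The genuine scalar three-term relation involves four vectors. With all brackets equal to $\pm1$ it only forces the three products to coincide, which is consistent over $\Z_3$. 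Each local configuration is three of the four points of the projective line over $\Z_3$, and you note yourself that this is realizable. So the obstruction must be global, and you do not supply it.

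The missing idea is structural, not a parity count: each family must respect the binary lines, i.e.\ $u_{a+b}\in\spn{3}{u_a,u_b}$ for all distinct $a,b\in E$. The paper proves this by contradiction.
\begin{itemize}
\item Suppose $u_a,u_b,u_{a+b}$ are independent and normalize them to $e_1,e_2,e_3$.
\item For $x,y$ in different cosets of $L=\spn{2}{a,b}$, each of $(a,b,x,y)$, $(a,a+b,x,y)$, $(b,a+b,x,y)$ is a basis. Writing $v_x=(z_{x,1},z_{x,2},z_{x,3},w_x)$, this forces $z_{x,k}w_y-z_{y,k}w_x\ne0$ for $k=1,2,3$.
\item Two cosets contain no label with $w_x=0$. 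After scaling to $w_x=1$ there, the two cosets take disjoint nonempty value sets in each coordinate, so in each coordinate one of them is a singleton.
\item Hence one coset is a singleton in two coordinates. Its four $v$-vectors then take at most $1\cdot1\cdot2=2$ values, contradicting pairwise non-proportionality.
\end{itemize}
Given this closure property, take the seven $u$-vectors indexed by the nonzero elements of any $3$-dimensional subspace of $\Z_2^4$. They span a $3$-space, because a plane over $\Z_3$ holds at most four pairwise non-proportional vectors. They therefore realize the Fano matroid over $\Z_3$, which is impossible. Your plan isolates neither the closure step nor the reduction to the Fano matroid, and without them a sign count has nothing to act on.
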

\begin{proof}
Suppose, for a contradiction, that two such families exist.

Any two distinct labels $a,b\in E$ are linearly independent over $\Z_2$, so they extend to a basis $(a,b,c,d)$.
Applying~\eqref{eq:mixed} to $(a,b,c,d)$ and $(c,d,a,b)$ gives
$$
    \det(u_a,u_b,v_c,v_d)\ne0 \wideand \det(u_c,u_d,v_a,v_b)\ne0.
$$
Thus $u_a,u_b$ are linearly independent, and so are $v_a,v_b$.
In particular, all the vectors are nonzero, and no two vectors in the same family are scalar multiples of each other.

We may apply the same invertible linear map to all the vectors in both families.
We may also multiply any individual vector by a nonzero scalar.
Neither operation changes condition~\eqref{eq:mixed}.

We first prove that
\begin{equation}\label{eq:mixed-span}
    u_{a+b}\in \spn{3}{u_a,u_b} \qquad\text{for all distinct }a,b\in E.
\end{equation}
Suppose this fails for some $a,b$.
Then $u_a,u_b,u_{a+b}$ are linearly independent.
After a change of coordinates, we may assume that $u_a=e_1$, $u_b=e_2$, $u_{a+b}=e_3$, where $e_1,e_2,e_3,e_4$ are the standard basis vectors of $\Z_3^4$.

Let $L=\spn{2}{a,b}$.
The twelve labels outside $L$ form three cosets $C_1,C_2,C_3$ of $L$, each containing four labels.
If $x$ and $y$ belong to different cosets, their images in~$\Z_2^4/L\cong\Z_2^2$ are distinct and nonzero.
They are therefore linearly independent in this quotient.
It follows that each of $(a,b,x,y)$, $(a,a+b,x,y)$, $(b,a+b,x,y)$ is a basis of $\Z_2^4$.

Write $v_x=(z_{x,1},z_{x,2},z_{x,3},w_x)$.
Apply~\eqref{eq:mixed} to the three bases above.
Expanding the determinants along the first two columns gives
\begin{equation}\label{eq:ratios}
    z_{x,k}w_y-z_{y,k}w_x\ne0  \qquad(k=1,2,3)
\end{equation}
whenever $x$ and $y$ belong to different cosets.

If two different cosets each contained a label with $w_x=0$, we could choose $x,y$ from those cosets with $w_x=w_y=0$.
This would contradict~\eqref{eq:ratios}.
Hence at least two cosets contain no label with $w_x=0$.
Call them $C_1$ and $C_2$.

For each $x\in C_1\cup C_2$, multiply $v_x$ by $w_x^{-1}$.
The last coordinate of each of these eight vectors is now $1$.
Let $S,T\subset\Z_3^3$ be the sets of their first three coordinates, for $x\in C_1$ and $x\in C_2$, respectively.
Each set has four distinct points, since no two vectors in the $v$-family are scalar multiples of each other.

For $k=1,2,3$, let $S_k$ and $T_k$ be the sets of values appearing in the $k$th coordinate of $S$ and $T$.
Since all the last coordinates are now $1$, \eqref{eq:ratios} says that $S_k\cap T_k=\varnothing$.
These are nonempty subsets of the three-element set $\Z_3$, so for each $k=1,2,3$, at least one of $S_k$ and $T_k$ is a singleton.
Since there are three coordinates, after interchanging $S$ and $T$ and permuting the coordinates if necessary, we may assume $|S_1| = |S_2| = 1$.
Since $T_3$ is nonempty and disjoint from $S_3$, we have $|S_3| \leq 2$.
Thus $|S| \leq 2$, contrary to $|S| = 4$.
This contradiction proves~\eqref{eq:mixed-span}.

We now use~\eqref{eq:mixed-span} to obtain the final contradiction.
Choose three linearly independent labels $a,b,c\in\Z_2^4$.
The seven vectors with labels
$$
    a,\ b,\ c,\ a+b,\ a+c,\ b+c,\ a+b+c
$$
all lie in $ W=\spn{3}{u_a,u_b,u_c}$.
Indeed, this follows directly from~\eqref{eq:mixed-span} for the first six vectors.
For the last one, apply the same relation to the labels $a+b$ and $c$.

We claim that $\dim W=3$.
Otherwise, these seven vectors would lie in a vector space of dimension at most two over $\Z_3$.
A two-dimensional vector space over $\Z_3$ has eight nonzero vectors, which form four pairs $\{w,-w\}$.
It therefore contains at most four nonzero vectors no two of which are scalar multiples of each other.
This contradicts the existence of our seven vectors.

The same argument applies to every basis of~$U=\spn{2}{a,b,c}$.
By~\eqref{eq:mixed-span}, $u_p,u_q,u_{p+q}$ are dependent for any distinct $p, q \in U \setminus \{0\}$.
All other triples of the seven vectors indexed by $U$ are independent by the preceding argument.
Hence the seven vectors represent the Fano matroid over~$\Z_3$.
This is impossible over~$\Z_3$, since the Fano matroid is representable only over fields of characteristic two~\cite[Proposition~6.4.8(i)]{Oxley2011book}.
\end{proof}

\begin{remark}
  Taking $u_a = v_a$ for all $a \in E$ in Lemma~\ref{lem:mixed} shows that the real universal complex associated with $\Z_2^4$ admits no mod~$3$ characteristic map~\cite{Baralic-Vavpetic-Vucic2024}.
  It therefore admits no integral characteristic map either~\cite{Ayzenberg2014preprint}.
  In fact, its Buchstaber number is known to be~$10$~\cite{Sun2017}.
\end{remark}

We next turn Lemma~\ref{lem:mixed} into conditions on the minors of a single matrix.
For a matrix $D$ and a set $\sigma$ of column indices, write $D_\sigma$ for the submatrix with those columns in increasing order.

List the elements of $E$ as $b_1,\ldots,b_{15}$, and put
\begin{equation}\label{eq:parameters}
    N=3\cdot41^{30}+1 \wideand m_0=n+4=30N.
\end{equation}
Define a $4\times m_0$ matrix $B$ over $\Z_2$ by repeating the block of $30$ columns
$$
    (b_1,b_1,b_2,b_2,\ldots,b_{15},b_{15})
$$
exactly $N$ times.
Equivalently,
\begin{equation}\label{eq:binary}
    B_{30(t-1)+2j-1}=B_{30(t-1)+2j}=b_j \qquad(1\le t\le N,\ 1\le j\le15).
\end{equation}
Every label occurs at both an odd and an even position in each block. Since a block has even length, these are also the parities of the indices in the full matrix.

Let $\cO$ consist of the sets $\sigma=\{i_1<i_2<i_3<i_4\}\subset[m_0]$ such that $i_1,i_2,i_3,i_4$ have alternating parity and $\det B_\sigma = 1 \in \Z_2$.
It is clear that $\cO$ is nonempty.

\begin{lemma}\label{lem:repeat}
    There is no matrix $A\in M_{4,m_0}(\Z_3)$ such that
    $$
        \det A_\sigma\ne0 \in \Z_3 \qquad\text{for every }\sigma\in\cO.
    $$
\end{lemma}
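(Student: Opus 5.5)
Suppose, for a contradiction, that such a matrix $A$ exists. The plan is to extract from $A$ two families $\{u_a\}$ and $\{v_a\}$ indexed by $E$ satisfying~\eqref{eq:mixed}, which Lemma~\ref{lem:mixed} forbids. For each block $t\in\{1,\dots,N\}$ and each label $b_j$, the block contains one odd column $30(t-1)+2j-1$ and one even column $30(t-1)+2j$ carrying $b_j$. Define, for block $t$,
$$
    u^{(t)}_{b_j}=A_{30(t-1)+2j-1},\qquad v^{(t)}_{b_j}=A_{30(t-1)+2j}.
$$
These are the odd-position and even-position vectors. The idea is to choose four blocks $t_1<t_2<t_3<t_4$ and read off column indices $i_1<i_2<i_3<i_4$ taken from these blocks in that order, with parities odd, even, odd, even. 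Then for any basis $(a,b,c,d)$ of $\Z_2^4$, the set $\{i_1,\dots,i_4\}$ formed from the odd column for $a$ in block $t_1$, the even column for $c$ in block $t_2$, the odd column for $b$ in block $t_3$ and the even column for $d$ in block $t_4$ lies in $\cO$. It has alternating parity, and $\det B_\sigma=\det(a,c,b,d)=1$. We then get $\det(u^{(t_1)}_a,v^{(t_2)}_c,u^{(t_3)}_b,v^{(t_4)}_d)\neq0$.

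The difficulty is that the vectors come from different blocks, whereas Lemma~\ref{lem:mixed} needs a single family $u$ and a single family $v$. This is where the huge $N$ enters, through a pigeonhole or Ramsey argument. Each block determines a \emph{colour}: the pair of families $(u^{(t)},v^{(t)})$, which lies in $(\Z_3^4)^{30}$. The number of colours up to scaling the individual vectors is finite but large, roughly at most $41^{30}$ after projectivizing, since $\mathbb P^3(\F_3)$ has $40$ points and we add one symbol for the zero vector. With $N=3\cdot 41^{30}+1$ blocks, pigeonhole gives at least four blocks of the same projective colour. Since nonvanishing of a determinant is insensitive to rescaling columns, we may then take $u=u^{(t_1)}$ and $v=v^{(t_1)}$ and identify the corresponding vectors in the other three blocks up to scalars. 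The bound needed is $N>3\cdot41^{30}$, which matches~\eqref{eq:parameters} exactly, so this is the intended count.

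It remains to arrange the determinant in the form $\det(u_a,u_b,v_c,v_d)$, up to sign. Since $\det(u_a,v_c,u_b,v_d)=-\det(u_a,u_b,v_c,v_d)$, condition~\eqref{eq:mixed} holds for every basis $(a,b,c,d)$, because $(a,c,b,d)$ is then also a basis. Lemma~\ref{lem:mixed} then gives the contradiction. The main point to check carefully is that every basis ordering is realized by a valid $\sigma\in\cO$. This works because the four indices are forced into increasing order by the choice of distinct blocks, regardless of the positions of the labels within a block. It also requires that the labels be allowed to repeat across blocks, which is fine because the blocks are distinct.
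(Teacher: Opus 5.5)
Your proposal is correct and follows essentially the same route as the paper. You pigeonhole over the at most $41^{30}$ projective colour lists of a $30$-column block to get four blocks $t_1<t_2<t_3<t_4$ with identical lists, select the odd $a$, even $c$, odd $b$, even $d$ columns to obtain a set in $\cO$, replace these columns by common representatives, and swap the middle two columns to contradict Lemma~\ref{lem:mixed}.
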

\begin{proof}
    Suppose that such a matrix $A$ exists.
    Give two nonzero columns the same color exactly when they are scalar multiples of each other.
    Give the zero vector a separate color.
    There are $3^4-1=80$ nonzero vectors in $\Z_3^4$, and each nonzero color consists of a pair $\{w,-w\}$.
    Thus there are $41$ colors in total.
    
    Divide $A$ into $N$ consecutive blocks of $30$ columns.
    A block has at most $41^{30}$ possible ordered color lists.
    Since $N=3\cdot41^{30}+1$, four blocks have the same list.
    Write their positions as $t_1<t_2<t_3<t_4$.
    
    For each $b_j\in E$, let $u_{b_j}$ represent the common color at position $2j-1$ in these four blocks, and let $v_{b_j}$ represent the common color at position $2j$.
    Represent the zero color by zero.
    
    Take any ordered binary basis $(a,b,c,d)$.
    In the four blocks, select columns as follows:
    \[
    \begin{array}{c|c|c}
        \text{block} & \text{binary label} & \text{position}\\ \hline
        t_1 & a & \text{odd}\\
        t_2 & c & \text{even}\\
        t_3 & b & \text{odd}\\
        t_4 & d & \text{even}
    \end{array}
    \]
    Their column indices in $A$ are increasing and have alternating parity, and their binary labels form a basis.
    Their index set therefore belongs to $\cO$, so these columns are independent over $\Z_3$.
    
    The selected columns have the same colors as $u_a,v_c,u_b,v_d$, respectively.
    They are nonzero, so replacing them by these representatives preserves independence. 
    Interchanging the middle two columns now gives
    $$
        \det(u_a,u_b,v_c,v_d)\ne0.
    $$
    This holds for every binary basis $(a,b,c,d)$, contrary to Lemma~\ref{lem:mixed}.
\end{proof}

We will use the following standard relation between complementary minors; see~\cite[Theorem~2.2.8]{Oxley2011book}.
\begin{lemma}\label{lem:complementary-minors}
    Let $D\in M_{r,q}(\F)$ have rank $r$, where $\F$ is a field and $r<q$.
    Let the rows of $H\in M_{q-r,q}(\F)$ form a basis of $\ker D$.
    For every $r$-element set $\sigma\subset[q]$, we have
    $$
        \det D_\sigma\ne0 \quad\Longleftrightarrow\quad \det H_{\sigma^c}\ne0,
    $$
    where $\sigma^c=[q]\setminus\sigma$.
\end{lemma}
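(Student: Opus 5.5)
We prove Lemma~\ref{lem:complementary-minors} by reducing to a normal form. Both sides of the equivalence survive suitable changes of coordinates, so it is enough to check the claim for one convenient matrix. Fix an $r$-element set $\sigma\subset[q]$. Reorder the columns so that $\sigma$ comes first; this permutes the columns of $D$ and $H$ in the same way and does not affect whether the relevant minors vanish. Write $D=(D_\sigma\mid D_{\sigma^c})$ and $H=(H_\sigma\mid H_{\sigma^c})$, where $H_{\sigma^c}$ is a square matrix of size $q-r$. Here $\ker D$ means the null space $\{x\in\F^q: Dx=0\}$, which has dimension $q-r$ because $\operatorname{rank} D=r$. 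So the rows of $H$ span exactly the row vectors $x$ with $Dx^{T}=0$, that is, $DH^{T}=0$.

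For ($\Rightarrow$), suppose $\det D_\sigma\ne0$. Replace $D$ by $D_\sigma^{-1}D=(I_r\mid M)$, where $M=D_\sigma^{-1}D_{\sigma^c}$. This does not change the kernel, and it scales every $r\times r$ minor by the same nonzero factor. A vector $(x,y)$ with $x\in\F^r$ and $y\in\F^{q-r}$ lies in the kernel exactly when $x=-My$. Hence the kernel is the row space of $H_0=(-M^{T}\mid I_{q-r})$. Any other basis matrix has the form $H=GH_0$ with $G\in GL_{q-r}(\F)$. Therefore $H_{\sigma^c}=G$, which is invertible.

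For ($\Leftarrow$), suppose $\det H_{\sigma^c}\ne0$ but $\det D_\sigma=0$. Then some nonzero $x\in\F^{r}$ satisfies $D_\sigma x=0$. Extend $x$ by zeros on $\sigma^c$ to get a vector $\tilde x\in\F^q$. This $\tilde x$ lies in $\ker D$, so $\tilde x=g H$ for some row vector $g$. Look at the coordinates in $\sigma^c$: $0=gH_{\sigma^c}$. Since $H_{\sigma^c}$ is invertible, this forces $g=0$, hence $\tilde x=0$, which is a contradiction.

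The only point needing care is the bookkeeping: checking that the column reordering and left multiplications act compatibly on $D$ and $H$. Nothing here is a real obstacle; the step that carries the content is the explicit description $(-M^{T}\mid I)$ of the kernel.
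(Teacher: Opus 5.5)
Your proof is correct. The forward direction, via the normal form $(I_r\mid M)$ and the explicit kernel basis $(-M^{T}\mid I_{q-r})$, and the converse, via extending a null vector of $D_\sigma$ by zeros, are both complete. The paper does not prove this lemma and instead cites Oxley's Theorem~2.2.8 (if $[I_r\mid A]$ represents a matroid, then $[-A^{T}\mid I_{q-r}]$ represents its dual). That theorem is the same standard-form computation your forward direction carries out, so your argument is essentially the one behind the cited result.
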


We now extend an assignment of nonzero binary vectors to a mod~$2$ characteristic map after stellar subdivisions.
Facets on which the assigned vectors are already independent will be left unchanged.
The proof of the following lemma uses a determinant reduction similar to that in~\cite[Theorem~2.1]{Bruns-vonThaden2017}.

\begin{lemma}\label{lem:relative-subdivision}
    Let $L$ be a finite pure $(d-1)$-dimensional simplicial complex on~$[q]$, where $d\ge1$, and let $\lambda\colon[q]\to\Z_2^d\setminus\{0\}$.
    Then there exists a finite pure $(d-1)$-dimensional simplicial complex $L'$ with the following properties:
    \begin{enumerate}
      \item $\lambda$ extends to a mod~$2$ characteristic map on $L'$.
      \item Every facet of $L$ with $\det \lambda_F = 1$ remains a facet of $L'$.
      \item $L'$ is polytopal whenever $L$ is polytopal.
    \end{enumerate}
\end{lemma}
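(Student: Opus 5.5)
We will obtain $L'$ from $L$ by a finite sequence of stellar subdivisions of faces, assigning a label in $\Z_2^d\setminus\{0\}$ to each new vertex. Stellar subdivisions preserve polytopality: if $L=\partial P^*$ for a simplicial polytope, subdividing a face $\tau$ corresponds to placing a new vertex slightly beyond the barycenter of $\tau$. So (3) is automatic once every step is a stellar subdivision. Property (2) will follow if we only ever subdivide faces that are not contained in any \emph{good} facet, that is, a facet $F$ with $\det\lambda_F=1$. Such a subdivision only replaces facets containing $\tau$, and good facets are never among them. At every stage the labelling stays nonzero on all vertices, and we call a facet \emph{bad} if its labels are dependent.

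The key step is a reduction procedure that strictly decreases a complexity measure on bad facets, in the spirit of the determinant reduction in~\cite[Theorem~2.1]{Bruns-vonThaden2017}. For a bad facet $F$, the labels $\lambda_F$ span a proper subspace. Choose a minimal dependent subset $\tau\subseteq F$, so that $\sum_{i\in\tau}\lambda(i)=0$ over $\Z_2$ with $|\tau|\ge 2$. Since the labels are nonzero and $\tau$ is minimal, in fact $|\tau|\ge3$ unless two labels coincide. We stellarly subdivide $\tau$, introducing a vertex $w$, and give it the label $\lambda(w)=\lambda(i_0)+\lambda(i_1)$ for some $i_0\ne i_1\in\tau$. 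This label is nonzero whenever $\lambda(i_0)\neq\lambda(i_1)$; in the case $|\tau|=2$ we instead choose $\lambda(w)$ as suitable for the surrounding span, as discussed below.

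Every new facet has the form $G\setminus\{i\}\cup\{w\}$, where $G\supseteq\tau$ is an old facet and $i\in\tau$. For these we track a measure such as the rank deficiency of $\lambda_G$ together with the size of its minimal circuit. The new label replaces one element of the circuit by a partial sum of it, so the circuit $\tau$ becomes either shorter or broken. We will argue by induction on the lexicographic pair (number of bad facets with a circuit of maximal size, total size), or more simply on $\sum_{G\text{ bad}}$ of the number of labels involved in circuits. Good facets $G\supseteq\tau$ cannot exist, since $\tau$ is dependent, so (2) is preserved.

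The main obstacle is guaranteeing termination: newly created facets $G\setminus\{i\}\cup\{w\}$ coming from \emph{other} facets $G\supseteq\tau$ may become bad or stay bad. We first try the following choice of label for $w$. Since $\tau$ is dependent, every $G\supseteq\tau$ is bad. Any facet $G\setminus\{i\}\cup\{w\}$ then has span contained in $\operatorname{span}\lambda_G+\langle\lambda(w)\rangle$. The choice $\lambda(w)=\sum_{i\in\tau\setminus\{i_0\}}\lambda(i)$, which equals $\lambda(i_0)$, gives no progress, so we instead pick $\lambda(w)$ outside $\operatorname{span}\lambda_\tau$ when possible. This requires a generic choice relative to all $G\supseteq\tau$ simultaneously, which can fail over $\Z_2$. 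If it fails, we fall back to iterating: we subdivide edges $\{i_0,i_1\}$ with label $\lambda(i_0)+\lambda(i_1)$, and use the potential $\sum_{G}\bigl(d-\operatorname{rank}\lambda_G\bigr)$, which we would check decreases or at least has the cardinality of the minimal circuit decreasing. We expect this verification to be the delicate part of the proof.
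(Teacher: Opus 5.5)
There is a genuine gap, at the step you yourself leave open: termination. The framework is fine. Subdividing only at faces $\tau$ that are dependent mod~$2$ means no good facet is touched, and stellar subdivisions preserve polytopality. But none of the potentials you suggest actually decreases.

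\begin{itemize}
\item \emph{Partial-sum labels do not change spans.} If $\lambda(w)\in\operatorname{span}\lambda_\tau$, as $\lambda(i_0)+\lambda(i_1)$ is, take any $G\supseteq\tau$ and $i\in\tau$. Then $\lambda(i)$ is the sum of the other labels of $\tau$, so the new facet $(G\setminus\{i\})\cup\{w\}$ has exactly the same span as $\lambda_G$. Each bad facet containing $\tau$ is therefore replaced by $|\tau|$ bad facets with the same rank deficiency, and $\sum_G(d-\operatorname{rank}\lambda_G)$ goes up.
\item \emph{Shrinking circuits only leads to a dead end.} Circuit sizes do shrink, but this drives you toward repeated labels ($|\tau|=2$), where the partial-sum label is $0$. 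At that point you would need one label outside $\operatorname{span}\lambda_G$ for all $G\supseteq\tau$ at once. As you observe, such a label need not exist over $\Z_2$.
\item \emph{The edge-subdivision fallback fails too.} If $\lambda(i_0)\neq\lambda(i_1)$, the edge $\{i_0,i_1\}$ may lie in a good facet, so subdividing it violates (2). If $\lambda(i_0)=\lambda(i_1)$, the proposed label is zero.
\end{itemize}
Mod-$2$ rank data is simply too coarse to record progress.

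The missing idea is to carry integer information. Choose lifts $V_i\in\Z^d$ with $V_i\equiv\lambda(i)\pmod 2$ and $\det V_F\neq0$ for every facet $F$. This is possible because $\det(\widetilde{\lambda}_F+2X_F)$ is a nonzero polynomial in $X$, with top-degree part $2^d\det X_F$. Hence the product over all facets is nonzero at some integer point.

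Now run the following step. Pick a facet $F$ with the largest even value $M$ of $|\det V_F|$, and a minimal mod-$2$ circuit $\tau\subseteq F$. Write $\sum_{i\in\tau}V_i=2^rW$ with $r\ge1$ and $W\not\equiv0\pmod 2$; the sum is nonzero because $V_F$ is independent over $\Q$. Subdivide at $\tau$ and label the new vertex $w$ by $W$.

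Expanding along the new column gives $|\det V_{(G\setminus\{i\})\cup\{w\}}|=2^{-r}|\det V_G|\le M/2$ for every $G\supseteq\tau$, since each such $G$ has even determinant at most $M$. Facets with odd determinant are untouched. So the pair (largest even absolute determinant, number of facets attaining it) decreases lexicographically, and the process terminates. Reducing mod~$2$ then gives the characteristic map, and (2) and (3) hold as in your setup.

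The integer determinant is a well-founded measure that strictly improves on every affected facet in a single step. That is exactly what your mod-$2$ potentials lack.
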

\begin{proof}
    Choose integer lifts $V_i$ of $\lambda(i)$ so that $\det V_F\ne0$ for every facet $F$.
    Indeed, fix any integer representatives $\widetilde{\lambda(i)}$ and write $V_i = \widetilde{\lambda(i)} + 2 X_i$.
    Each facet determinant is a nonzero polynomial in the entries of $X = (X_1, \ldots, X_q)$, since its degree-$d$ part is $2^d \det X_F$.
    The product over all facets is therefore a nonzero polynomial, so it is nonzero at some integer matrix $X$.
    
    If an even determinant remains, choose a facet $F$ with the largest even absolute determinant.
    Let $\tau\subseteq F$ be a minimal dependent set modulo two.
    Then $|\tau|\ge2$, and we can write
    $$
        \sum_{i\in\tau}V_i=2^rW, \quad \text{ for some } r\ge1 \text{ and } W\not\equiv0\pmod2.
    $$
    The sum is nonzero because the vectors on $F$ are independent over $\Q$.
    
    Stellarly subdivide at $\tau$ and assign $W$ to the new vertex $v$.
    For each facet $G$ containing $\tau$, the new facets are $G_i=(G\setminus\{i\})\cup\{v\}$, where $i\in\tau$.
    Expansion in the new column gives
    $$
        |\det V_{G_i}|=2^{-r}|\det V_G|<|\det V_G|.
    $$
    
    Let $M = | \det V_F |$.
    Since $\tau$ is dependent modulo~$2$, every facet $G$ containing~$\tau$ has even determinant with $| \det V_G | \leq M$.
    Hence each new facet has absolute determinant at most $M/2 < M$, while all facets with odd determinant remain unchanged.
    Each step therefore decreases either the largest even absolute determinant or the number of facets attaining it.
    The process stops after finitely many steps.
    
    Reducing the final vectors modulo two gives the required mod~$2$ characteristic map.
    No original vertex is removed, since every subdivision is at a face with at least two vertices.
    Polytopality is preserved under stellar subdivisions~\cite{Ewald-Shephard1974}.
\end{proof}

\section{Construction of the sphere}\label{sec:construction}

We use the matrix $B$, the collection $\cO$, and the integers $m_0,n$ defined in Section~\ref{sec:alg_lem}.
We first construct a simplicial polytope in which every complement of a set in $\cO$ is a facet.
We then apply Lemma~\ref{lem:relative-subdivision} to obtain the required polytopal sphere.

Put $p_j=(j,j^2,\ldots,j^n)\in\R^n$ for $j \in [m_0]$, and let
$$
    P=\conv(p_1,\ldots,p_{m_0}).
$$
This is known as a \emph{cyclic polytope}.
It is an $n$-dimensional simplicial polytope, and each~$p_j$ is a vertex of $P$.
\begin{lemma}\label{lem:cyclic-facets}
    For every $\sigma \in \cO$, the set $[m_0]\setminus\sigma$ is the vertex set of a facet of $P$.
\end{lemma}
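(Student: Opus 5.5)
We will use Gale's evenness condition for cyclic polytopes. For $P=C(m_0,n)$ with $n=m_0-4$, an $n$-element set $F\subset[m_0]$ is the vertex set of a facet exactly when, for any two indices $i<j$ not in $F$, the number of elements of $F$ strictly between $i$ and $j$ is even. Since $P$ is simplicial and each $p_j$ is a vertex, this identifies the facets with such $n$-subsets. So we take $\sigma=\{i_1<i_2<i_3<i_4\}\in\cO$ and check the condition for $F=[m_0]\setminus\sigma$, where the non-members of $F$ are exactly $i_1,\dots,i_4$.

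For a pair $i_k<i_l$, the elements of $F$ strictly between them are all integers in $(i_k,i_l)$ except the members of $\sigma$ lying there. Their number is
$$(i_l-i_k-1)-(l-k-1)=(i_l-i_k)-(l-k).$$
The alternating-parity assumption gives $i_l-i_k\equiv l-k \pmod 2$. This holds because consecutive elements of $\sigma$ differ by an odd number, so the difference between $i_k$ and $i_l$ is a sum of $l-k$ odd numbers. Hence every such count is even, and Gale's condition holds.

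The condition $\det B_\sigma=1$ is not needed here; only the alternating parity is used. The only point requiring care is citing Gale's evenness criterion in the right form (for facets, with $d=n$ and the separating pairs taken from the complement). We will quote it from a standard reference, e.g.\ Ziegler's \emph{Lectures on Polytopes}, Theorem~0.7. Nothing else is an obstacle: the criterion applies to any $m_0>n+1$, and here $m_0=n+4$.
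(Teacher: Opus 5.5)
Your proof is correct and follows the paper's route: both apply Gale's evenness criterion and use only the alternating parity of the indices in $\sigma$, not the condition $\det B_\sigma=1$. You also spell out the count $(i_l-i_k)-(l-k)$, which shows why alternating parity of the four removed indices gives Gale's condition; the paper leaves this step implicit.
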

\begin{proof}
    By Gale's evenness criterion~\cite{Ziegler1995book}, the complement of a four-element set is a facet of $P$ if and only if the four indices, in increasing order, have alternating parity.
    Every set in~$\cO$ satisfies this condition.
\end{proof}

We now construct the required polytopal sphere.
\begin{proposition}\label{prop:polytope}
    There is a polytopal $(n-1)$-sphere $K$ on the vertex set $[m]$, for some $m\ge m_0$, such that:
    \begin{enumerate}
        \item For every $\sigma\in\cO$, the set $[m_0]\setminus\sigma$ is a facet of $K$ with its original labels.
        \item The sphere $K$ admits a mod~$2$ characteristic map.
    \end{enumerate}
\end{proposition}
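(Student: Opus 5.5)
The plan is to apply Lemma~\ref{lem:relative-subdivision} to the boundary complex of the cyclic polytope $P$, with labels taken from the complementary (Gale dual) matrix of $B$. Concretely, let $L=\partial P^*$ viewed as a simplicial complex on $[m_0]$. This is a pure $(n-1)$-dimensional polytopal complex, since $P$ is simplicial and every $p_j$ is a vertex. We need a labelling $\lambda\colon[m_0]\to\Z_2^n\setminus\{0\}$ for which every facet $[m_0]\setminus\sigma$, $\sigma\in\cO$, already has $\det\lambda_{[m_0]\setminus\sigma}=1$.

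To get it, I first check that $B$ has rank $4$ over $\Z_2$. It contains all fifteen nonzero vectors of $\Z_2^4$, and $4<m_0$, so this holds. Next I choose $H\in M_{n,m_0}(\Z_2)$ whose rows form a basis of $\ker B$; this has the right size because $m_0-4=n$. Set $\lambda(j)$ to be the $j$th column of $H$. By Lemma~\ref{lem:complementary-minors} with $\F=\Z_2$, for every $\sigma\in\cO$ we have $\det B_\sigma=1\ne0$, so $\det H_{\sigma^c}\ne0$. That is, $\det\lambda_{[m_0]\setminus\sigma}=1$.

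The only subtle point is that Lemma~\ref{lem:relative-subdivision} requires every label to be nonzero, so each column of $H$ must be nonzero. Suppose column $j$ of $H$ were zero. Then every $\sigma^c$ containing $j$ would have vanishing minor, so every $\sigma\in\cO$ would have to contain $j$. I would rule this out directly. Take any $\sigma\in\cO$ containing $j$, say built from block $t$, and shift its indices to another block, by $30$ positions. Because the pattern repeats with even period, the shifted set has the same labels and the same parities. It therefore still lies in $\cO$ and no longer contains $j$. Since $N\ge 2$, such a shift is always possible. Alternatively, and more simply, a zero column of $H$ means the $j$th coordinate vector lies in the row space of $B$; one can exhibit a member of $\cO$ avoiding $j$ to get the contradiction.

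Now apply Lemma~\ref{lem:relative-subdivision} to $L$ with $d=n$. This produces a polytopal pure $(n-1)$-dimensional complex $L'=K$ with the following properties:
\begin{enumerate}
  \item $\lambda$ extends to a mod~$2$ characteristic map on $K$.
  \item Every facet $[m_0]\setminus\sigma$ with $\sigma\in\cO$ remains a facet, with the original vertex labels, since original vertices are never removed.
  \item $K$ is obtained from $\partial P^*$ by stellar subdivisions, so it is again a polytopal $(n-1)$-sphere.
\end{enumerate}
The new vertices are labelled $m_0+1,\ldots,m$. The main obstacle is the nonvanishing of the columns of $H$, which is handled by the block-shift argument above; everything else is direct bookkeeping from the earlier lemmas.
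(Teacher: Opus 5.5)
Your proposal is correct and follows the paper's proof. You take $H$ to be a basis of $\ker B$, apply Lemma~\ref{lem:complementary-minors} to get $\det H_{\sigma^c}\ne0$ for $\sigma\in\cO$, and then apply Lemma~\ref{lem:relative-subdivision} to the boundary complex of the cyclic polytope. Note that this complex is $\partial P$, not $\partial P^*$, since $P$ itself is simplicial, and its facets $[m_0]\setminus\sigma$ come from Lemma~\ref{lem:cyclic-facets}.

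The only difference is the nonzero-column step. The paper just notes that $B_j=B_k$ for some $k\ne j$, so $e_j+e_k\in\ker B$ rules out a zero $j$th column. Your shift argument also works, but only if you start from a $\sigma\in\cO$ lying inside a single block, for example positions $2j_1-1<2j_2<2j_3-1<2j_4$ with $j_1<j_2<j_3<j_4$ and $b_{j_1},\dots,b_{j_4}$ a basis. An arbitrary $\sigma\in\cO$ may stretch from the first block to the last and then cannot be shifted by $\pm30$.
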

\begin{proof}
    The matrix $B$ has rank $4$, because its columns include every nonzero vector of $\Z_2^4$.
    Its kernel therefore has dimension $m_0-4=n$.
    Choose an $n\times m_0$ matrix $\Lambda$ over $\Z_2$ whose rows form a basis of $\ker B$.
    Lemma~\ref{lem:complementary-minors} gives 
    \begin{equation}\label{eq:complement}
        \det\Lambda_{\sigma^c}\ne0     \qquad\text{for every }\sigma\in\cO.
    \end{equation}
    
    Every column of $\Lambda$ is nonzero.
    Indeed, for each $j$ there is another index $k$ with $B_j=B_k$, so $e_j+e_k\in\ker B$.
    If the $j$th column of $\Lambda$ were zero, every vector in its row space would have zero $j$th coordinate.
    This contradicts $e_j+e_k\in\ker B$.
    
    Let $L$ be the boundary complex of the cyclic polytope $P$ defined above.
    Assign $\Lambda_j$ to vertex $j$.
    Apply Lemma~\ref{lem:relative-subdivision} to this assignment.
    We obtain a polytopal $(n-1)$-sphere $K$ with a mod~$2$ characteristic map, which proves (2).

    No original vertex is removed.
    Keep the labels $1,\ldots,m_0$ for the original vertices, and label the new vertices by $m_0+1,\ldots,m$.
    For every $\sigma \in \cO$, Lemma~\ref{lem:cyclic-facets} says that $\sigma^c = [m_0] \setminus \sigma$ is a facet of $L$.
    By~\eqref{eq:complement}, the columns of $\Lambda$ on this facet are independent.
    Lemma~\ref{lem:relative-subdivision} therefore leaves this facet unchanged, which proves (1).
\end{proof}

\begin{proof}[Proof of Theorem~\ref{thm:main}]
    Let $K$ be the polytopal sphere from Proposition~\ref{prop:polytope}.
    By Proposition~\ref{prop:polytope}(2), the sphere $K$ admits a mod~$2$ characteristic map.
    
    Suppose, for a contradiction, that $K$ also has a mod~$3$ characteristic map.
    Retain only its columns with the original labels $1,\ldots,m_0$, and denote by $\Omega$ the resulting $n\times m_0$ matrix over $\Z_3$.
    For every $\sigma\in\cO$, Proposition~\ref{prop:polytope}(1) says that $\sigma^c$ is a facet of $K$ with only original labels.
    Thus $\det\Omega_{\sigma^c}\ne0$ for every $\sigma\in\cO$.
    Since $\cO$ is nonempty, $\Omega$ has rank $n$.
    Its kernel has dimension $m_0-n=4$.
    Choose $A\in M_{4,m_0}(\Z_3)$ whose rows form a basis of this kernel.
    Applying Lemma~\ref{lem:complementary-minors} to $\Omega$ and the $n$-element set $\sigma^c$ gives
    $$
        \det A_\sigma\ne0 \qquad\text{for every }\sigma\in\cO.
    $$
    This contradicts Lemma~\ref{lem:repeat}.
    Therefore $K$ has no mod~$3$ characteristic map.
    
    Any integral characteristic map would reduce modulo~$3$ to a mod~$3$ characteristic map.
    Thus $K$ has no integral characteristic map, and therefore
    $$
        s(K)<m-n=s_{\R}(K).
    $$
\end{proof}

\end{document}